\documentclass[12pt]{amsart}
\usepackage{latexsym, amssymb, amsmath,mathtools}

\newtheorem{thm}{Theorem}[section]
\newtheorem{lem}[thm]{Lemma}
\newtheorem{cor}[thm]{Corollary}
\newtheorem{prop}[thm]{Proposition}
\newtheorem{rem}[thm]{Remark}
\theoremstyle{definition}
\newtheorem{defn}[thm]{Definition}

\newtheorem{example}{Example}

%%%%%%%macro%%%%%%%%%%%%%%%%%%%%%%%%%%%%%%%%%%%%%%%%%%

\newcommand{\Ric}{{\rm Ric}}

%\newcommand{}{}
%\newcommand{}{}
%%%%%%%%%%%%%%%%%%%%%%%%%%%%%%%%%%%%%%%%%%%%%%%%%%%%%%

\begin{document}

% \title[short text for running head]{full title}
\title[Generalized Yamabe solitons]
{Classification of generalized Yamabe solitons in Euclidean spaces}

%    Only \author and \address are required; other information is
%    optional.  Remove any unused author tags.

%    author one information
% \author[short version for running head]{name for top of paper}
\author{Shunya Fujii}
\address{Department of Mathematics,
 Shimane University, Nishikawatsu 1060 Matsue, 690-8504, Japan.}
\curraddr{}
\email{shunyaf3@gmail.com}

\author{Shun Maeta}
\address{Department of Mathematics,
 Shimane University, Nishikawatsu 1060 Matsue, 690-8504, Japan.}
\curraddr{}
\email{shun.maeta@gmail.com~{\em or}~maeta@riko.shimane-u.ac.jp}
\thanks{The second author is partially supported by the Grant-in-Aid for Young Scientists, No.19K14534, Japan Society for the Promotion of Science.}

%    \subjclass is required.
\subjclass[2010]{53C25, 53C40, 53C42}

\date{}

\dedicatory{}

%    "Communicated by" -- provide editor's name; required.
\commby{}

\keywords{Yamabe solitons; almost Yamabe solitons; $k$-Yamabe solitons; $h$-almost Yamabe solitons; Yamabe flow; concurrent vector fields; Hessian manifolds}
%    Abstract is required.
\begin{abstract}
In this paper, we consider generalized Yamabe solitons which include many notions, such as Yamabe solitons, almost Yamabe solitons, $h$-almost Yamabe solitons, gradient $k$-Yamabe solitons and conformal gradient solitons.
We completely classify the generalized Yamabe solitons on hypersurfaces in Euclidean spaces arisen from the position vector field.
\end{abstract}

\maketitle

%    Text of article.

%    Bibliographies can be prepared with BibTeX using amsplain,
%    amsalpha, or (for "historical" overviews) natbib style.
\bibliographystyle{amsplain}
%    Insert the bibliography data here.

\section{Introduction}\label{intro}

An $n$-dimensional Riemannian manifold $(M,g)$ is called a {\it Yamabe soliton}, if there exist a complete vector field $v$ and $\rho\in \mathbb{R}$ such that
\begin{equation}\label{YS}
(R-\rho)g=\frac{1}{2}\mathcal{L}_{v}g,
\end{equation}
where $R$ is the scalar curvature of $M$ and $\mathcal{L}_vg$ is the Lie derivative of $g$.
If $v$ is the gradient of some smooth function $f$ on $M$, then $(M,g,f)$ is called a gradient Yamabe soliton. If $f$ is constant, then $(M,g,f)$ is called trivial.

Yamabe solitons are special solutions of the Yamabe flow introduced by R. Hamilton \cite{Hamilton89}.
In the last decade, Yamabe solitons have developed rapidly.
To understand the Yamabe soliton, many generalizations of it have been introduced:
\begin{enumerate}
\item
Almost Yamabe solitons \cite{BB13}:

For a complete vector field $v$  and a smooth function $\rho$ on $M$,
$$(R-\rho)g=\frac{1}{2}\mathcal{L}_{v}g.$$

\item
Gradient $k$-Yamabe solitons \cite{CMM12}:

For a smooth function $f$ on $M$ and $\rho\in\mathbb{R}$,
$$2(n-1)(\sigma_k-\rho)g=\nabla\nabla f,$$
where $\sigma_k$ denotes the $\sigma_k$-curvature of $g$, that is, 
$$\sigma_k=\sigma_k(g^{-1}A)=\sum_{i_1<\cdots<i_{k}}\mu_{i_1}\cdots\mu_{i_k}~~~~~(\text{for}~1\leq k\leq n),$$
where $A=\frac{1}{n-2}(\Ric-\frac{1}{2(n-1)}Rg)$ is the Schouten tensor and $\mu_1,\cdots,\mu_n$ are the eigenvalues of the symmetric endomorphism $g^{-1}A$. Here, $\Ric$ is the Ricci tensor of $M$. We remark that the $1$-Yamabe soliton is the Yamabe soliton.\\

\item
$h$-almost Yamabe solitons \cite{Zeng20}:

For a complete vector field $v$ and smooth functions $\rho$ and $h~(h>0~\text{or}~h<0)$ on $M$,
$$(R-\rho)g=\frac{h}{2}\mathcal{L}_vg.$$

\item
Conformal gradient solitons \cite{CMM12}:

For smooth functions $f$ and $\varphi$ on $M$,
$$\varphi g=\nabla\nabla f.$$
\end{enumerate}

Many examples of these solitons are known.
A warped product manifold $(\mathbb{R}\times_{\cosh t}\mathbb{S}^n,~ dt^2+\cosh^2t g_{\mathbb{S}^n},~ f=\sinh t,~ \rho=\sinh t+n)$ is a gradient almost Yamabe soliton, where $(\mathbb{S}^n,g_{\mathbb{S}^n})$ is the standard sphere (cf. \cite{BB13}).
On the cylinder $\mathbb{S}^n\times\mathbb{R}$, gradient $k$-Yamabe solitons are constructed (cf. \cite{BHS18}).
For a nonzero real number $m$ and a positive constant $\beta$, $(\mathbb{R}^n,g_{can},f=-m\log (|x|^2+\beta),\rho=\frac{2m}{|x|^2+\beta})$ is a gradient $(-\frac{m}{|x|^2+\beta})$-almost Yamabe soliton, where $(\mathbb{R}^n,g_{can})$ is the Euclidean space (cf.~\cite{Zeng20}).

Even though there are many examples of these solitons, some classifications are known (cf. \cite{BB13}, \cite{BHS18}, \cite{CSZ12}, \cite{CMM12}, \cite{2}, \cite{3}, \cite{4}, \cite{Maeta19}, \cite{Maeta20}, \cite{SM19} and \cite{Zeng20}).
In particular, there are many results for Yamabe solitons.
Any compact Yamabe soliton has constant scalar curvature (cf. \cite{CD08}, \cite{CLN06}  and \cite{3}). P. Daskalopoulos and N. Sesum \cite{2} showed that any locally conformally flat complete gradient Yamabe soliton with positive sectional curvature has to be rotationally symmetric.  
G. Catino, C. Mantegazza and L. Mazzieri \cite{CMM12} classified nontrivial complete gradient Yamabe solitons with nonnegative Ricci tensor.
H.-D. Cao, X. Sun and Y. Zhang \cite{CSZ12} gave a useful classification.
Recently, the second author \cite{Maeta20} classified 3-dimensional complete gradient Yamabe solitons with divergence-free Cotton tensor. 

To consider all the generalized Yamabe solitons mentioned above, we consider the following.
\begin{defn}
A Riemannian manifold $(M,g)$ is called a {\em conformal soliton} if there exists a complete vector field $v$ such that 
\begin{equation}\label{CS}
\varphi g=\frac{1}{2}\mathcal{L}_vg,
\end{equation} 
for a smooth function $\varphi:M\rightarrow \mathbb{R}.$ We denote the conformal soliton by $(M,g,v,\varphi)$.
If $v\equiv0$, then $M$ is called trivial.
\end{defn}
\begin{rem}
Conformal solitons include Yamabe solitons, almost Yamabe solitons, gradient $k$-Yamabe solitons, $h$-almost Yamabe solitons and conformal gradient solitons.
 Therefore, {\bf all the results in this paper can be applied to all these solitons.}

$v$ is called a conformal vector field.
\end{rem}
We can construct many examples of conformal solitons. For example, for a smooth function $f=\log(e^x+e^y+1)$ on $\mathbb{R}^2$ with coordinate system $\{x,y\}$, $g=\nabla\nabla f$ is a Riemannian metric on $\mathbb{R}^2$. Thus $(\mathbb{R}^2,g,f,\varphi=1)$ is a conformal (gradient) soliton. 

To understand conformal solitons, we consider them as hypersurfaces of some Riemannian manifold. 
As the metric of $M$ is induced from the ambient space, it seems natural to take a soliton vector field $v$ from the ambient space. Let $V$ be some vector field of the ambient space. Then we can decompose $V$ as the tangential component $V^T$ and the normal component $V^\perp$. Therefore, if the ambient space has a (natural) vector field $V$, it is natural to take $v=V^T$. The Euclidean space $\mathbb{E}^{n+1}$ is the most basic and natural one, because it has the position vector field $V$. 
Some solitons on hypersurfaces in Euclidean spaces or more general manifolds have been studied (cf. \cite{1}, \cite{CD18}, \cite{CK09}, \cite{CK11}, \cite{CK12} and \cite{SM19}).

In this paper, we completely classify conformal solitons on a hypersurface in the Euclidean space $\mathbb{E}^{n+1}$ arisen from the position vector field. 

\begin{thm}\label{main}
Any conformal soliton $(M,g,V^T,\varphi)$ on a hypersurface in the Euclidean space $\mathbb{E}^{n+1}$ is contained in a hyperplane, a conic hypersurface or a hypersphere.
\end{thm}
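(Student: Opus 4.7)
The plan is to reduce the conformal soliton equation to a single extrinsic identity on $M$ and then perform a case analysis driven by the support function $\sigma := \la V, N \ra$, where $N$ is a unit normal to $M$ in $\mathbb{E}^{n+1}$. Writing the position vector as $V = V^T + \sigma N$ and using that $V$ is a concurrent vector field in Euclidean space, so that $\wnabla_X V = X$ for the ambient flat connection $\wnabla$, I would apply the Gauss and Weingarten formulas to the decomposition $\wnabla_X V = \wnabla_X V^T + X(\sigma) N + \sigma\, \wnabla_X N$. Extracting tangential and normal components yields
\[ \nabla_X V^T = X + \sigma A(X), \qquad X(\sigma) = -h(V^T, X), \]
where $A$ is the shape operator and $h(X,Y) = \la A(X), Y\ra$ is the second fundamental form. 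Symmetrising the first identity gives $\tfrac12(\mathcal{L}_{V^T} g)(X,Y) = g(X,Y) + \sigma\, h(X,Y)$, and inserting this into the soliton equation $\varphi g = \tfrac12 \mathcal{L}_{V^T} g$ collapses it to the pointwise identity
\[ (\varphi - 1)\,g = \sigma\, h \qquad \text{on } M. \]

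I would then split into two cases. If $\sigma \equiv 0$, then $V$ is tangent to $M$ at every point, so $M$ is invariant under the dilation flow $\phi_t(x) = e^t x$ of $V$ in $\mathbb{E}^{n+1}$; hence $M$ is a union of open rays through the origin and lies in a conic hypersurface. If instead $\sigma \not\equiv 0$, then on the open set $U := \{\sigma \neq 0\}$ the identity reads $h = \lambda g$ with $\lambda := (\varphi - 1)/\sigma$, so $M$ is totally umbilical on $U$. For $n \geq 2$, applying the Codazzi equation to $h = \lambda g$ forces $\lambda$ to be locally constant, hence constant on each connected component $C$ of $U$. If the constant $\lambda_C$ vanishes then $C$ is flat and lies in an affine hyperplane; otherwise the identity $\wnabla_X(x + \lambda_C^{-1} N) = X + \lambda_C^{-1}(-\lambda_C X) = 0$ shows that $c := x + \lambda_C^{-1} N$ is a constant vector along $C$, placing $C$ inside the hypersphere of radius $1/|\lambda_C|$ centered at $c$.

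The remaining task is globalisation: upgrading the classification from $U$ to all of $M$. My plan is to combine continuity of $h$ with connectedness of $M$. At a boundary point $p \in \partial U$ one has $\sigma(p) = 0$, and $p$ is a limit of points $q_n \in C$ with $h(q_n) = \lambda_C g(q_n)$, so $h(p) = \lambda_C g(p)$ by continuity and the center $c = x + \lambda_C^{-1} N$ (or the tangent hyperplane, when $\lambda_C = 0$) extends continuously to $\overline{C}$. When two components of $U$ have overlapping closures in $M$, the forced matching of $h$ at a shared boundary point equates their umbilical constants and identifies the corresponding spheres or hyperplanes; chaining components via connectedness of $M$ then places all of $U$, and by continuity all of $M$, inside a single hypersphere or hyperplane.

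The principal obstacle I anticipate is precisely this globalisation across the interface $\{\sigma = 0\}$: one must rule out a smooth hypersurface that is umbilical in a fixed sphere on one open region and genuinely conic on another. The resolution I have in mind exploits the fact that a non-flat cone is nowhere totally umbilical, so continuity of the second-order data across the interface either forces $\lambda_C = 0$ on the umbilical side (in which case the common hyperplane may pass through the origin and is simultaneously a conic hypersurface) or precludes the conic side, keeping $M$ inside exactly one of the three listed types.
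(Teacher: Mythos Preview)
Your derivation of the key identity $(\varphi-1)\,g = \sigma\,h$ via Gauss--Weingarten and the subsequent case split mirror the paper's proof exactly; the paper packages the first step as a lemma valid for submanifolds of any ambient space carrying a concurrent field and writes your $\sigma$ as $\lambda$. The use of Codazzi to make the umbilical factor locally constant, and the identification of the resulting pieces as affine hyperplanes or hyperspheres via $\wnabla_X(x+\lambda_C^{-1}N)=0$, are likewise identical.

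The one genuine slip is in your globalisation across $\{\sigma=0\}$. You allow the possibility that $\lambda_C=0$ on a component $C\subset\{\sigma\neq0\}$ and that the hyperplane containing $C$ ``may pass through the origin and is simultaneously a conic hypersurface''. This cannot occur: on such a $C$ you have $h\equiv0$, so your own formula $X(\sigma)=-h(V^T,X)$ gives $\sigma$ \emph{constant} on $C$, and since $C\subset\{\sigma\neq0\}$ that constant is nonzero. Hence the hyperplane carrying $C$ does not pass through the origin, and moreover $\sigma$ cannot tend to $0$ along $C$, so $\overline{C}$ is disjoint from $\{\sigma=0\}$. Connectedness of $M$ then forces $C=M$, contradicting the mixed-case hypothesis. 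This is precisely how the paper disposes of hyperplane pieces in the mixed case, leaving only hypersphere components on $\{\sigma\neq0\}$. The remaining incompatibility between a hypersphere piece (all principal curvatures equal to $\lambda_C\neq0$) and an adjacent open conic region (radial principal curvature identically $0$) then follows by continuity of $h$, essentially as you outline; the paper phrases the same obstruction as a curvature mismatch between a flat cone and a positively curved sphere.
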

A submanifold $M^n$ in the Euclidean space $\mathbb{E}^m$ is called a {\it conic submanifold} if it is an open portion of a cone with vertex at the origin (cf. \cite{Chen16}). Here we remark that by the definition, $n$-dimensional planes through the origin are included in $n$-dimensional conic submanifolds. 
 
The following fact is used later.

\begin{prop}[\cite{Chen16}]\label{Chenlem}
Let $f:M\rightarrow \mathbb{E}^m$ be an isometric immersion of an $n$-dimensional Riemannian manifold into the $m$-dimensional Euclidean space $\mathbb{E}^m$. Then $V=V^T$ holds identically if and only if $M$ is a conic submanifold.
\end{prop}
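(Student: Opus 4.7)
The plan is to prove both implications by analyzing the integral curves of the position vector field $V(x)=x$ on $\mathbb{E}^m$, whose trajectories $\gamma(t)=e^t p$ are precisely the open rays emanating from the origin.

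For the ``if'' direction, assuming $M$ is conic with vertex at the origin, I would note that through any point $p=f(q)\ne 0$ an open arc of the ray $\{sp:s>0\}$ lies in $f(M)$. Lifting this arc locally back to $M$ and differentiating at $s=1$ yields a tangent vector of $M$ at $q$ that maps under $df_q$ to $p=V_p$; this forces $V_p\in df_q(T_qM)$, hence $V^\perp_p=0$. The case $p=0$ is immediate since $V_p=0$ there.

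For the converse, I would assume $V^\perp\equiv 0$ so that $V^T$ is a smooth tangent vector field on $M$, and study a maximal integral curve $\alpha:(a,b)\to M$ of $V^T$ through a given point $q$. Since $(f\circ\alpha)'(t)=df_{\alpha(t)}(V^T_{\alpha(t)})=V_{f(\alpha(t))}=f(\alpha(t))$, the composition $f\circ\alpha$ solves the linear ODE $u'=u$ in $\mathbb{E}^m$, forcing $f(\alpha(t))=e^{t-t_0}f(q)$. Thus an open arc of the ray through $f(q)$ is contained in $f(M)$ for every $q$, which is exactly the local ``ray-foliation'' structure of a cone with vertex at the origin.

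The main obstacle I anticipate is packaging the converse direction cleanly in view of $f$ being merely an immersion (not necessarily an embedding) and the integral curves being a priori defined only on a small interval. One has to verify that the local rays-through-the-origin structure assembles consistently into an honest open portion of an $n$-dimensional cone over an $(n-1)$-dimensional transversal slice of the unit sphere, so that the conclusion ``$M$ is conic'' matches the definition of conic submanifold used in the paper. Once this is in place, both directions follow from the elementary ODE computation above.
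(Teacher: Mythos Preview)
The paper does not supply its own proof of this proposition: it is quoted verbatim from \cite{Chen16} and used as a black box, so there is no argument in the present paper to compare yours against. Your proposal is therefore not a reconstruction of the authors' proof but an independent proof of the cited result.

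On its own merits your outline is the standard and correct one. The ``if'' direction is immediate once one notes that an open portion of a cone with vertex at the origin contains, through each of its nonzero points $p$, an open arc of the ray $\{sp:s>0\}$, whose tangent vector at $s=1$ is exactly $V_p$. For the converse, your computation $(f\circ\alpha)'=f\circ\alpha$ is the key step and is correct; it shows that $f$ carries integral curves of $V^T$ to radial segments, so that $f(M)$ is locally a union of open ray segments through the origin. The residual issue you flag --- that $f$ is only an immersion and the integral curves are only locally defined --- is handled by working away from the preimage of the origin and using the radial projection $\pi:\mathbb{E}^m\setminus\{0\}\to S^{m-1}$: since $V$ spans $\ker d\pi$ and is tangent to $M$, the composition $\pi\circ f$ has constant rank $n-1$, so locally $f(M)$ sits in the cone over an $(n-1)$-dimensional submanifold of $S^{m-1}$. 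That is exactly the local description of a conic submanifold in the sense used here (recall the paper explicitly allows $n$-planes through the origin as conic), and connectedness of $M$ then lets you patch the local cones consistently. With this remark added, your argument is complete.
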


%%%%%%%%%%%%%%%%%%%%%%%%%%%%%%%%%%%%%%%%%%%%%%%%%%%%%%%%%%%%%%%%%%%%
\section{Preliminaries}\label{Pre} 

Let $(N,\tilde{g})$ be an $m$-dimensional Riemannian manifold and $(M,g)$ be an $n$-dimensional submanifold in $(N,\tilde{g})$.
All manifolds in this paper are assumed to be smooth, orientable and connected.
 We denote Levi-Civita connections on $(M,g)$ and $(N,\tilde{g})$ by $\nabla$ and $\tilde{\nabla}$, respectively.
The Lie derivative of $g$ is defined by 
$$\mathcal{L}_Xg(Y,Z)=X(g(Y,Z))-g([X,Y],Z)-g(Y,[X,Z]),$$
for any vector fields $X,Y, Z$ on $M$.

For any vector fields $X,Y$ tangent to $M$ and $\eta$ normal to $M$, the formula of Gauss is given by
\begin{equation*}
{\tilde{\nabla}}_XY={\nabla}_XY+h(X,Y),
\end{equation*}
where ${\nabla}_XY$ and $h(X,Y)$ are the tangential and the normal components of ${\tilde{\nabla}}_XY$.
The formula of Weingarten is given by
\begin{equation*}
{\tilde{\nabla}}_X\eta=-A_{\eta}(X)+D_X\eta,
\end{equation*}
where $-A_{\eta}(X)$ and $D_X\eta$ are the tangential and the normal components of ${\tilde{\nabla}}_X\eta$.
$A_{\eta}(X)$ and $h(X,Y)$ are related by
\begin{equation*}
g(A_{\eta}(X),Y)=\tilde{g}(h(X,Y),\eta).
\end{equation*}
The mean curvature vector $H$ of $M$ in $N$ is given by
\begin{equation*}
\displaystyle H=\frac{1}{n}~\text{trace}~h.
\end{equation*}
For any vector fields $X,Y,Z,W$ tangent to $M$, the equation of Gauss is given by
\begin{equation*}
\begin{tabular}{ll}
$\tilde{g}(\tilde{Rm}(X,Y)Z,W)=$ & $g(Rm(X,Y)Z,W)$ \vspace{0.3pc}\\
~ & $+\tilde{g}(h(X,Z),h(Y,W))$ \vspace{0.3pc}\\
~ & $-\tilde{g}(h(X,W),h(Y,Z)),$
\end{tabular}
\end{equation*}
where $Rm$ and $\tilde{Rm}$ are Riemannian curvature tensors of $M$ and $N$, respectively.
The equation of Codazzi is given by
\begin{equation*}
(\tilde{Rm}(X,Y)Z)^{\perp}=({\bar{\nabla}}_Xh)(Y,Z)-({\bar{\nabla}}_Yh)(X,Z),
\end{equation*}
where $(\tilde{Rm}(X,Y)Z)^{\perp}$ is the normal component of $\tilde{Rm}(X,Y)Z$ and ${\bar{\nabla}}_Xh$ is defined by
\begin{equation*}
({\bar{\nabla}}_Xh)(Y,Z)=D_Xh(Y,Z)-h({\nabla}_XY,Z)-h(Y,{\nabla}_XZ).
\end{equation*}
If $N$ is a space of constant curvature, then the equation of Codazzi reduces to
\begin{equation*}
0=({\bar{\nabla}}_Xh)(Y,Z)-({\bar{\nabla}}_Yh)(X,Z).
\end{equation*}

%%%%%%%%%%%%%%%%%%%%%%%%%%%%%%%%%%%%%%%%%%%%
\section{Conformal solitons with a concurrent vector field}\label{CSwcv}

The position vector field $V$ on Euclidean spaces satisfies
$$\nabla_XV=X,$$
for any vector field $X$.
We consider one of the generalizations of the position vector field, namely, a concurrent vector field.
\begin{defn}
A vector field $V$ on $M$ is called a concurrent vector field if it satisfies
$$\nabla_XV=X,$$
for any vector field $X$ on $M$.
\end{defn}
There are several studies of concurrent vector fields (see for example \cite{BY74}, \cite{5} and \cite{6}).

In this section, we consider a conformal soliton with a concurrent vector field.
 
Firstly, we show a useful formula for study of conformal solitons.

\begin{lem}
Let $(M,g,f,\varphi)$ be a conformal gradient soliton. Then, we have
\begin{equation}\label{f1}
(n-1)\Delta{\varphi}+\frac{1}{2}~g(\nabla R,\nabla f)+R \varphi =0.
\end{equation}
\end{lem}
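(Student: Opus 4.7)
The plan is to derive the stated identity as a two-stage consequence of the defining equation $\nabla\nabla f = \varphi g$ of a conformal gradient soliton, together with the Ricci identity and the contracted second Bianchi identity.

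First I would take the trace of $\nabla\nabla f = \varphi g$ to obtain $\Delta f = n\varphi$. Then I would compute the divergence of the Hessian equation itself. In index notation, the relation reads $\nabla_i\nabla_j f = \varphi g_{ij}$, so contracting with $g^{ij}$ after differentiating in the $i$-direction gives $\nabla^i(\nabla_i\nabla_j f) = \nabla_j\varphi$. Using the symmetry of the Hessian, the left-hand side equals $\nabla^i\nabla_j\nabla_i f$, which by the Ricci identity applied to the 1-form $df$ differs from $\nabla_j\nabla^i\nabla_i f = \nabla_j\Delta f = n\nabla_j\varphi$ precisely by a Ricci term $R_{jk}\nabla^k f$. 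Matching the two expressions yields the intermediate identity
\begin{equation*}
\Ric(\nabla f) = -(n-1)\nabla\varphi.
\end{equation*}

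For the second stage, I would take the divergence of this intermediate identity. The right-hand side becomes $-(n-1)\Delta\varphi$. On the left, applying the product rule gives
\begin{equation*}
\nabla^j\bigl(R_{jk}\nabla^k f\bigr) = (\nabla^j R_{jk})\nabla^k f + R_{jk}\nabla^j\nabla^k f.
\end{equation*}
The first term equals $\tfrac{1}{2}g(\nabla R,\nabla f)$ by the contracted second Bianchi identity $\nabla^j R_{jk}=\tfrac12\nabla_k R$, and the second term equals $R\varphi$ since $\nabla^j\nabla^k f = \varphi g^{jk}$ and $R_{jk}g^{jk}=R$. Rearranging produces exactly \eqref{f1}.

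I do not expect a serious obstacle here; the derivation is essentially a Bochner-type computation. The only point requiring care is the sign conventions when invoking the Ricci commutation formula on the 1-form $df$, which must be chosen consistently so that the Ricci term appears with the sign that yields $-(n-1)\nabla\varphi$ in the intermediate identity. Once that is fixed, the rest is a direct application of the product rule and the contracted Bianchi identity.
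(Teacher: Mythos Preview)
Your argument is correct and follows essentially the same route as the paper: first obtain the intermediate identity $(n-1)\nabla_i\varphi + R_{ij}\nabla^j f = 0$ via the Ricci commutation formula and $\Delta f = n\varphi$, then differentiate once more, contract, and invoke the contracted second Bianchi identity together with $\nabla\nabla f = \varphi g$ to reach \eqref{f1}. The paper phrases the second step as ``apply $\nabla_l$ and take the trace'' without naming Bianchi explicitly, but the computation is identical to your divergence step.
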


\begin{proof}
Since
\begin{equation*}
\Delta {\nabla}_if={\nabla}_i\Delta f+R_{ij}{\nabla}_jf,
\end{equation*}
\begin{equation*}
\Delta {\nabla}_if={\nabla}_k{\nabla}_k{\nabla}_if={\nabla}_k(\varphi g_{ki})={\nabla}_i\varphi,
\end{equation*}
and
\begin{equation*}
{\nabla}_i\Delta f={\nabla}_i(n\varphi)=n{\nabla}_i\varphi,
\end{equation*}
we have
\begin{equation}\label{f1.1}
(n-1){\nabla}_i\varphi+R_{ij}{\nabla}_jf=0,
\end{equation}
where $R_{ij}$ is the Ricci tensor of $M$.
By applying ${\nabla}_l$ to the both sides of $(\ref{f1.1})$, we obtain
\begin{equation}\label{f1.2}
(n-1){\nabla}_l{\nabla}_i\varphi+{\nabla}_lR_{ij} \cdot {\nabla}_jf+R_{ij}{\nabla}_l{\nabla}_jf=0.
\end{equation}
Taking the trace, we obtain $(\ref{f1})$.
\end{proof}

\begin{prop}\label{AYSwithC}
Any conformal soliton $(M,g,v,\varphi)$ which has a concurrent vector field $v$ is a conformal gradient soliton with 
$\varphi =1$.
\end{prop}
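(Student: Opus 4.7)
The plan is to exploit the concurrent condition $\nabla_X v = X$ directly in the conformal soliton equation, which should pin down $\varphi$ almost immediately, and then to exhibit an explicit potential function for $v$.

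First, I would expand the right-hand side of the soliton equation \eqref{CS} using the standard formula
\[
\frac{1}{2}\mathcal{L}_v g(X,Y) = \frac{1}{2}\bigl(g(\nabla_X v, Y) + g(X, \nabla_Y v)\bigr).
\]
Substituting $\nabla_X v = X$ and $\nabla_Y v = Y$ collapses this to $g(X,Y)$. Matching with $\varphi g(X,Y)$ for all $X,Y$ forces $\varphi \equiv 1$.

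Next I would produce a potential for $v$. The natural candidate is $f = \tfrac{1}{2}|v|^2$, and the concurrent property computes its gradient in one line: for any vector field $X$,
\[
X(f) = \tfrac{1}{2}X\bigl(g(v,v)\bigr) = g(\nabla_X v, v) = g(X, v),
\]
so $\nabla f = v$. Combined with $\varphi = 1$, this realizes $(M,g,v,\varphi)$ as the conformal gradient soliton $(M,g,f,1)$, proving the proposition.

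There is essentially no obstacle here: the whole argument is algebraic manipulation of $\nabla_X v = X$, and no global or curvature hypothesis is needed. The only minor point to verify is that $f = \tfrac12|v|^2$ is well defined and smooth, which is automatic since $v$ is a smooth vector field on $M$.
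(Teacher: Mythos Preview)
Your proof is correct and is essentially the same as the paper's: both use the concurrent condition $\nabla_X v = X$ to reduce $\tfrac{1}{2}\mathcal{L}_v g$ to $g$, yielding $\varphi=1$, and both take $f=\tfrac{1}{2}g(v,v)$ as the potential with the identical one-line gradient computation. The only cosmetic differences are that the paper expands $\mathcal{L}_v g$ from its definition rather than quoting the covariant-derivative formula, and it reverses the order of the two steps.
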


\begin{proof}
Since $v$ is a concurrent vector field, we have
\begin{equation}\label{c.0}
g(v,X) =g(v,{\nabla}_Xv)=X(\frac{1}{2}g(v,v)),
\end{equation}
and
\begin{equation}\label{c.1}
\begin{tabular}{ll}
$\mathcal{L}_vg(X,Y)$ & = $vg(X,Y) - g([v,X],Y) - g(X,[v,Y])$ \vspace{0.3pc}\\
~& $= vg(X,Y) - vg(X,Y) + g({\nabla}_Xv,Y) + g(X,{\nabla}_Yv)$ \vspace{0.3pc}\\
~& $= 2g(X,Y),$
\end{tabular}
\end{equation}
for any vector fields $X$, $Y$ on $M$.
By putting $f = \frac{1}{2}g(v,v)$ on the equation $(\ref{c.0})$, we obtain $ v = \nabla f $.
Substituting $(\ref{c.1})$ into $(\ref{CS})$, we have
\begin{equation*}
\varphi = 1.
\end{equation*}
\end{proof}

From Proposition~\ref{AYSwithC}, the equation of the conformal soliton with a concurrent vector field is as follows:
$$g=\nabla\nabla f.$$
Therefore $g$ should be a Hessian metric. The Hessian metric is an important notion on Geometry and Physics (cf. \cite{AA14}, \cite{CY80}, \cite{MM17} and \cite{SY97}).

\begin{example}
A Hessian manifold $(M,g)$ is a conformal gradient soliton with $\varphi=1$.
\end{example}

If $M$ is compact, then there exist no non trivial conformal solitons with a concurrent vector field.
\begin{cor}
There exists no compact conformal soliton such that the conformal vector field is a concurrent vector field.
\end{cor}

\begin{proof}
By Proposition $\ref{AYSwithC}$ and $(\ref{CS})$, we have $\Delta f=n$.
By applying maximum principle, we get $f$ is constant, which cannot happen.
\end{proof}
%%%%%%%%%%%%%%%%%%%%%%%%%%%%%%%%%%%%%%%%
\section{Conformal solitons on submanifolds}\label{SubCS}

In this section, we assume that $(N,\tilde{g})$ is a Riemannian manifold endowed with a concurrent vector field $V$ and $(M,g)$ is a submanifold in $(N,\tilde{g})$.
$V^T$ and $V^{\perp}$ denote the tangential and the normal components of $V$, respectively.

Firstly, we show the following lemma which will be used later for the purpose of classification of the conformal solitons. 

\begin{lem}\label{NSAYS}
Any conformal soliton $(M,g,V^T,\varphi)$ on a submanifold $M$ in $N$ satisfies
\begin{equation}\label{ENSAYS}
(\varphi-1)g(X,Y)=g(A_{V^{\perp}}(X),Y),
\end{equation}
for any vector fields $X, Y$ on $M$.
\end{lem}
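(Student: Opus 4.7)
The plan is to unwind the defining equation $\varphi g=\tfrac12\mathcal{L}_{V^T}g$ by computing $\nabla_X V^T$ directly, exploiting the fact that $V$ is concurrent in the ambient manifold. The key identity we need is a formula for $\nabla_X V^T$ coming from the decomposition $V=V^T+V^\perp$ and the Gauss--Weingarten equations.

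First, I would start from the ambient relation $\widetilde\nabla_X V = X$, valid for any $X$ tangent to $M$ (since $V$ is concurrent on $N$). Splitting $V=V^T+V^\perp$, by linearity of $\widetilde\nabla$,
\begin{equation*}
X=\widetilde\nabla_X V^T+\widetilde\nabla_X V^\perp.
\end{equation*}
Applying the Gauss formula to the first term and the Weingarten formula to the second gives
\begin{equation*}
X=\nabla_X V^T+h(X,V^T)-A_{V^\perp}(X)+D_X V^\perp.
\end{equation*}
Taking the tangential component of both sides yields
\begin{equation*}
\nabla_X V^T = X+A_{V^\perp}(X).
\end{equation*}

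Next, I would plug this into the Lie derivative formula. For tangent vector fields $X,Y$ on $M$,
\begin{equation*}
\mathcal{L}_{V^T}g(X,Y)=g(\nabla_X V^T,Y)+g(X,\nabla_Y V^T)=2g(X,Y)+g(A_{V^\perp}(X),Y)+g(X,A_{V^\perp}(Y)).
\end{equation*}
Since the shape operator $A_{V^\perp}$ is self-adjoint with respect to $g$, the last two terms coincide, so
\begin{equation*}
\tfrac12\mathcal{L}_{V^T}g(X,Y)=g(X,Y)+g(A_{V^\perp}(X),Y).
\end{equation*}
Substituting into the conformal soliton equation $\varphi g=\tfrac12\mathcal{L}_{V^T}g$ and rearranging gives exactly $(\varphi-1)g(X,Y)=g(A_{V^\perp}(X),Y)$, which is the claim.

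There is no real obstacle here: the only substantive ingredients are the decomposition of $\widetilde\nabla_X V$ into its tangential and normal pieces via Gauss--Weingarten, and the symmetry of the shape operator. The concurrent vector field hypothesis on the ambient manifold is essential, as it replaces $\widetilde\nabla_X V$ by $X$ and produces the inhomogeneous term that eventually becomes $(\varphi-1)g$.
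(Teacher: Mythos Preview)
Your proof is correct and follows essentially the same approach as the paper: both compute $\mathcal{L}_{V^T}g(X,Y)=g(\nabla_X V^T,Y)+g(X,\nabla_Y V^T)$ and then use the Gauss--Weingarten decomposition together with the concurrency of $V$ to obtain $2g(X,Y)+2g(A_{V^\perp}(X),Y)$. The only cosmetic difference is that you first isolate the identity $\nabla_X V^T=X+A_{V^\perp}(X)$ and then substitute, whereas the paper performs the same calculation directly inside the pairing with $Y$.
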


\begin{proof}
From the definition of the Lie derivative, we have
\begin{equation}\label{s.3}
\begin{tabular}{ll}
$(\mathcal{L}_{V^T}g)(X,Y)$ & = $V^Tg(X,Y) - g(\nabla_{V^T}X-\nabla_X{V^T},Y) - g(X,\nabla_{V^T}Y-\nabla_Y{V^T})$\vspace{0.5pc}\\
~ & $=g(\nabla_X{V^T},Y) + g(X,\nabla_Y{V^T})$\vspace{0.5pc}\\
~ & $=\tilde g(\tilde{\nabla}_X{V}-\tilde{\nabla}_X{V^{\perp}},Y) + \tilde g(X,\tilde{\nabla}_Y{V}-\tilde{\nabla}_Y{V^{\perp}})$\vspace{0.5pc}\\
~ & $=2g(X,Y)+2g(A_{V^{\perp}}(X),Y)$,
\end{tabular}
\end{equation}
for any vector fields $X, Y$ on $M$.
Combining $(\ref{s.3})$ with $(\ref{CS})$, we obtain $(\ref{ENSAYS})$.
\end{proof}

\begin{prop}\label{CS is GCS}
Any conformal soliton $(M,g,V^T,\varphi)$ on a submanifold $M$ in $N$  is a conformal gradient soliton. 
\end{prop}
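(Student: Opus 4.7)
The plan is to produce an explicit potential function $f$ on $M$ whose gradient recovers $V^{T}$, thereby turning the conformal soliton equation into the Hessian-type equation that defines a conformal gradient soliton. The natural candidate, motivated by the computation in the proof of Proposition~\ref{AYSwithC}, is
\[
f := \tfrac{1}{2}\tilde{g}(V,V)\big|_{M}.
\]

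First I would compute $df$. For any vector field $X$ tangent to $M$, the concurrency $\tilde\nabla_X V = X$ on $N$ gives
\[
X(f) = \tilde{g}(\tilde\nabla_X V, V) = \tilde{g}(X, V) = \tilde{g}(X, V^{T} + V^{\perp}) = g(X, V^{T}),
\]
using that $X$ is tangent to $M$ so it is $\tilde{g}$-orthogonal to $V^{\perp}$. This identifies $\nabla f = V^{T}$, so the soliton vector field is automatically a gradient on $M$, regardless of the ambient geometry beyond the concurrency assumption.

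Next I would rewrite the conformal soliton equation (\ref{CS}) with $v = V^{T} = \nabla f$. Because $\frac{1}{2}\mathcal{L}_{\nabla f}\,g(X,Y) = g(\nabla_X \nabla f, Y) = (\nabla\nabla f)(X,Y)$, the equation becomes $\varphi\, g = \nabla\nabla f$, which is precisely the defining equation of a conformal gradient soliton. This produces the required gradient structure $(M,g,f,\varphi)$.

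I expect no serious obstacle; the argument only uses (i) concurrency of $V$ on the ambient space, to evaluate $X(\tilde{g}(V,V))$, and (ii) the orthogonal decomposition $V = V^{T} + V^{\perp}$, to discard the normal part when pairing with a tangent vector. The one small point to keep track of is that $\tilde\nabla$ (not $\nabla$) must be used when differentiating $V$, since $V$ is defined on $N$ and need not be tangent to $M$; the passage from $\tilde\nabla$ to $\nabla$ happens automatically via the tangential projection, in the same spirit as formula (\ref{s.3}) in the proof of Lemma~\ref{NSAYS}.
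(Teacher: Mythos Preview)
Your argument is correct and matches the paper's own proof essentially line for line: the paper likewise sets $f=\tfrac{1}{2}\tilde g(V,V)$ and, invoking the computation from Proposition~\ref{AYSwithC} together with $\tilde g(V,X)=g(V^T,X)$ for tangent $X$, concludes $V^T=\nabla f$. Your write-up is in fact slightly more self-contained than the paper's, since you spell out the use of concurrency and the orthogonal decomposition explicitly rather than by reference.
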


\begin{proof}
Set
\begin{equation*}
\displaystyle f=\frac{1}{2}~\tilde{g}(V,V).
\end{equation*}
For any vector field $X$ on $M$, we obtain
$$g(V^T,X)=\tilde g(V,X)=\tilde g (V,\tilde \nabla_XV)=X(\frac{1}{2}\tilde g(V,V))=Xf=g(\nabla f, X).$$
\end{proof}

\begin{prop}\label{MCS}
If a conformal soliton $(M,g,V^T,\varphi)$ on a submanifold $M$ in $N$ is minimal, then $\varphi=1$.
\end{prop}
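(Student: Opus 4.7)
The plan is to obtain $\varphi = 1$ simply by taking the trace of equation (\ref{ENSAYS}) from Lemma \ref{NSAYS} with respect to a local orthonormal frame on $M$, and then exploiting the minimality condition $H = 0$.

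First, I would pick a local orthonormal frame $\{e_1, \dots, e_n\}$ on $M$ and substitute $X = Y = e_i$ into
$(\varphi - 1) g(X,Y) = g(A_{V^\perp}(X), Y)$, then sum over $i$. The left-hand side yields $n(\varphi - 1)$, while the right-hand side gives $\text{trace}(A_{V^\perp})$.

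Next, I would rewrite the trace of the shape operator in terms of the second fundamental form using the identity $g(A_\eta(X), Y) = \tilde{g}(h(X,Y), \eta)$ from the preliminaries. This gives
\begin{equation*}
\text{trace}(A_{V^\perp}) = \sum_{i=1}^{n} g(A_{V^\perp}(e_i), e_i) = \sum_{i=1}^{n} \tilde{g}(h(e_i, e_i), V^\perp) = n\, \tilde{g}(H, V^\perp),
\end{equation*}
using the definition $H = \frac{1}{n} \text{trace}\, h$. Since $M$ is assumed to be minimal, we have $H = 0$, hence $\text{trace}(A_{V^\perp}) = 0$, and therefore $n(\varphi - 1) = 0$, which yields $\varphi = 1$.

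There is no genuine obstacle here: once Lemma \ref{NSAYS} is in hand, the result is essentially a one-line trace computation combined with the defining relation between $h$ and $A_\eta$. The only minor care needed is to keep the frame orthonormal so that the trace on the left really produces the factor $n$, and to correctly invoke the formula $g(A_\eta X, Y) = \tilde{g}(h(X,Y), \eta)$ to convert the trace of the shape operator into a multiple of the mean curvature vector.
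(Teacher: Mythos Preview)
Your proposal is correct and follows exactly the same argument as the paper: trace equation~(\ref{ENSAYS}) from Lemma~\ref{NSAYS} over a local orthonormal frame, rewrite the trace of $A_{V^\perp}$ as $n\,\tilde g(H,V^\perp)$, and conclude $\varphi=1$ from $H=0$.
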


\begin{proof}
Let $\{e_1,  \cdots , e_n\}$ be an orthonormal frame on $M$. By Lemma~$\ref{NSAYS}$, we have
\begin{equation*}
(\varphi-1)g_{ij}=g(A_{V^{\perp}}(e_i),e_j)=\tilde{g}(h(e_i,e_j),V^{\perp}).
\end{equation*}
Since $M$ is minimal and taking the trace, we obtain
\begin{equation*}
n(\varphi-1)=n\tilde g(H,V^{\perp})=0. 
\end{equation*}
Therefore, we conclude that
\begin{equation*}
\varphi=1.
\end{equation*}
\end{proof}

\begin{cor}
There exists no compact conformal soliton on a minimal submanifold in $N$ such that the conformal vector field is $V^T$. 
\end{cor}

\begin{proof}
By Proposition $\ref{MCS}$ and $(\ref{CS})$, we have $\Delta f=n$.
By applying maximum principle, we get $f$ is constant, which cannot happen.
\end{proof}

%%%%%%%%%%%%%%%%%%%%%%%%%%%%%%%%%%%%%%%%
\section{Proof of Theorem~$\ref{main}$}\label{Classification}

We hereafter denote $V$ by the position vector field of $\mathbb{E}^{n+1}$.
In this section, we give the proof of Theorem~$\ref{main}$ as follows.

\begin{thm}
Any conformal soliton $(M,g,V^T,\varphi)$ on a hypersurface in the Euclidean space $\mathbb{E}^{n+1}$ is contained in a hyperplane, a conic hypersurface or a hypersphere.
\end{thm}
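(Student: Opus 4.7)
The plan is to decompose the position vector into tangential and normal parts, derive a pointwise algebraic identity for the shape operator of $M$ from Lemma~\ref{NSAYS}, and then run a case analysis on where the normal component $V^\perp$ vanishes.

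First, I would fix a local unit normal $\xi$ to the hypersurface $M\subset\mathbb{E}^{n+1}$ and write $V=V^T+\rho\,\xi$, where $\rho:=\tilde g(V,\xi)$. Since $A_{V^\perp}=\rho\,A_\xi$, Lemma~\ref{NSAYS} becomes
\[
(\varphi-1)\,g(X,Y)\;=\;\rho\,g(A_\xi X,Y).
\]
Tracing against $g$ gives $\varphi-1=\rho H$, where $H$ is the scalar mean curvature, and substituting back yields the key pointwise identity $\rho\,(A_\xi-H\,\mathrm{Id})=0$ on $M$. In particular, $M$ is totally umbilic at every point where $\rho\neq 0$.

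Next, set $U:=\{p\in M:\rho(p)\neq 0\}$. If $U=\emptyset$, then $V=V^T$ on $M$ and Proposition~\ref{Chenlem} directly gives that $M$ is a conic hypersurface. If $U$ is dense in $M$, the identity above combined with continuity forces $A_\xi=H\,\mathrm{Id}$ identically, so $M$ is totally umbilic; the classical Codazzi argument (assuming $n\geq 2$) then shows $H$ is a constant, and $M$ lies in a hyperplane when $H=0$ or in a hypersphere of radius $1/|H|$ when $H\neq 0$.

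The main obstacle is the remaining mixed case, where $U$ is nonempty but not dense, so that the open set $W:=\mathrm{int}\{\rho=0\}$ is also nonempty. On each connected component of $U$, Codazzi forces $A_\xi=c\,\mathrm{Id}$ for some constant $c$, while on $W$ Proposition~\ref{Chenlem} identifies the components as open pieces of cones with vertex at the origin. I would rule this case out using continuity of $A_\xi$ across the common boundary $\partial U\cap\partial W$ in the connected manifold $M$: on the $U$-side every eigenvalue of $A_\xi$ equals $c$, while on the $W$-side $A_\xi$ has a zero eigenvalue in the radial direction $V^T$, so continuity forces $c=0$, and then the non-radial principal curvatures of the cone (which are $1/r$ times those of its spherical link) must also vanish at the boundary. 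This pins down $W$ locally as a hyperplane through the origin, making $A_\xi\equiv 0$ on $\overline U\cup W$; connectedness of $M$ then places all of $M$ in a single hyperplane. Thus the mixed case collapses into the hyperplane case, completing the trichotomy.
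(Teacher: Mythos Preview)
Your overall strategy and your handling of the cases $U=\emptyset$ and $U$ dense match the paper's proof (your ``$U$ dense'' formulation is in fact a bit cleaner than the paper's ``$\rho\neq 0$ everywhere''). The gap is in the mixed case. After correctly deducing $c=0$ for the $U$-component $\Omega$ adjacent to a point $p\in\partial W\subset\partial U$ (since $A_\xi(p)$ must simultaneously equal $c\,\mathrm{Id}$ and annihilate the nonzero radial vector $V(p)$), you try to push further and force the cone side to be a hyperplane through the origin. But the vanishing of the cone's non-radial principal curvatures $\kappa_i(\theta)/r$ that you obtain holds only at the single angular position of $p$; these curvatures are constant along rays but free to vary over the spherical link, so ``$A_\xi\equiv 0$ on $\overline U\cup W$'' does not follow. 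And even if it did, a hyperplane has $\rho=\tilde g(V,\xi)$ identically constant, so it cannot carry both $U\neq\emptyset$ (where $\rho\neq 0$) and $W\neq\emptyset$ (where $\rho=0$); the mixed case should end in a contradiction, not ``collapse into the hyperplane case.''

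The clean finish is available one step earlier: once $c=0$, the component $\Omega$ is totally geodesic and hence lies in an affine hyperplane, on which $\rho$ is a nonzero constant (since $\Omega\subset U$); this contradicts $\rho(p)=0$ at the boundary point $p\in\partial\Omega$. Thus the mixed case is impossible, i.e., $U$ is either empty or dense, and you are done. The paper reaches the same point from the other side: it first rules out hyperplane components of $U$ by exactly this $\rho$-constancy argument, so every $U$-component has $c\neq 0$ (a sphere piece), and then argues that a sphere piece cannot be smoothly joined to an open cone piece.
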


\begin{proof}
Let $\alpha$ be a mean curvature and $\lambda$ be a support function of $M$,  i.e., $H=\alpha N$ and $\lambda =\tilde{g}(N,V)$ with a unit normal vector field $N$. Set $U_0=\{x\in M| \lambda=0\}$.
From Lemma~$\ref{NSAYS}$, we have
\begin{equation*}
(\varphi-1)g_{ij}=\tilde{g}(h(e_i,e_j),V^{\perp})=\tilde{g}({\kappa}_i g_{ij} N,V)={\kappa}_i g_{ij} \lambda ,
\end{equation*}
where $A_N(e_i)={\kappa}_ie_i, ~ (i=1,\cdots ,n)$. 
Hence we have
\begin{equation}\label{e.1}
\varphi-1=\lambda {\kappa}_i.
\end{equation}

 Let $f$ be a smooth function on $M$ defined by 
 
 \begin{equation*}
f(x)\coloneqq \prod_{i=1}^n \kappa_i(x) \hspace{0.2in} x\in M.
\end{equation*} 

\noindent
\underline{Case 1. $U_0 = \emptyset$}:
By taking the summation, we obtain
\begin{equation}\label{e.2}
\varphi-1=\lambda \alpha.
\end{equation} 
Comparing ~$(\ref{e.1})$ and ~$(\ref{e.2})$, we have
\begin{equation*}
{\kappa}_i = \alpha.
\end{equation*}
Thus $M$ is totally umbilical with $A_N(e_i)=\alpha e_i$ and $h$ satisfies $h(X,Y)=\alpha g(X,Y) N$.
Since $N$ is a unit normal vector field, we have
\begin{equation*}
0={\tilde{\nabla}}_X(\tilde{g}(N,N))=2\tilde{g}({\tilde{\nabla}}_XN,N)=2\tilde{g}(D_XN,N).
\end{equation*}
Therefore, $D_XN=0$.
Hence we obtain
\begin{equation*}
\begin{tabular}{rl}
$({\bar{\nabla}}_Xh)(Y,Z)=$ & $D_Xh(Y,Z)-h({\nabla}_XY,Z)-h(Y,{\nabla}_XZ)$ \vspace{0.5pc} \\
~$=$ & $X(\alpha ) g(Y,Z) N,$
\end{tabular}
\end{equation*}
for any vector fields $X, Y, Z$ on $M$.
From the equation of Codazzi, we have
\begin{equation*}
X(\alpha)Y=Y(\alpha)X.
\end{equation*}
Since we can assume that $X$ and $Y$ are linearly independent, we conclude that $\alpha$ and $f$ are constant respectively.

If $\alpha=0$, then by ${\tilde{\nabla}}_XN=0,$ $N$, restricted to $M$, is a constant vector field in $\mathbb{E}^{n+1}$
and we have
\begin{equation*}
{\tilde{\nabla}}_X(\tilde{g}(V,N))=\tilde{g}({\tilde{\nabla}}_XV,N)+\tilde{g}(V,{\tilde{\nabla}}_XN)=\tilde{g}(X,N)=0.
\end{equation*}
This shows that $\lambda=\tilde{g}(V,N)$ is constant when $V$ and $N$ are restricted to $M$. 
Therefore, $M$ is contained in a hyperplane normal to $N$ which does not through the origin and $\varphi=1$.

If $\alpha\not=0$, then we have
\begin{equation*}
{\tilde{\nabla}}_X(V+{\alpha}^{-1}N)=X+{\alpha}^{-1} {\tilde{\nabla}}_XN=X+{\alpha}^{-1} (-A_N(X))=0.
\end{equation*}
This shows that the vector field $V+{\alpha}^{-1}N$, restricted to $M$, is a constant one in $\mathbb{E}^{n+1}.$ 
Therefore, $M$ is contained in a hypersphere.

\noindent
\underline{Case 2. $U_0=M$}:

We have $V=V^T$. By Proposition~\ref{Chenlem}, we obtain that $M$ is contained in a conic hypersurface.

\noindent
\underline{Case 3. Others}:

Take $p\in M\backslash U_0$, that is, $\lambda\not=0$ on some open set $\Omega\ni p$.
By the same argument as in Case 1, we have $\Omega$ is an open portion of a hyperplane or a hypersphere.

We consider the case that $\Omega$ is an open portion of a hyperplane.
Without loss of generality, we can take $\Omega$ as the maximum connected component which is an open set including $p$ on $M\backslash U_0$.
On $\Omega$, $\lambda=\tilde g(V,N)(\not=0)$ is constant, say $\lambda_\Omega$.
Since $M$ is connected, if $\Omega$ is closed, then $\Omega=M$, which is a contradiction.
If $M$ is not closed, then we have
$\partial \Omega\not=\emptyset$ and $\partial \Omega\cap\Omega=\emptyset.$
Take $q\in \partial \Omega$. Since $\lambda$ is continuous, we have $\lambda(q)=\lambda_\Omega$. Thus we can take an open neighborhood $U_q$ of $q$ such that $\lambda\not=0$ on $U_q$. Since $\Omega$ is the maximum connected component, we have a contradiction.
Hence, we have that $\Omega$ is an open portion of a hypersphere. 

If Int$U_0 \neq \emptyset$, for any $x\in$ Int$U_0$, we can take some open set $U_x\ni x$ which is included in $U_0$.
Let $\{ e_i\}$ be an orthonormal frame defined on $U_x$ which satisfies

\begin{equation}
A_N(e_i)(x)=\kappa_i(x)e_i(x),
\end{equation} 

and

\begin{equation}
\nabla_{e_i}e_j(x)=0, \hspace{0.2in} 1\leq i,j\leq n.
\end{equation} 

On $U_0$, we have $V=V^T$. Since $V$ is a concurrent vector field,

\begin{equation*}
X =\tilde{\nabla}_XV^T = \nabla_XV^T+h(X,V^T),
\end{equation*}

on Int$U_0$.

So we obtain

\begin{equation}\label{c}
\nabla_XV^T=X,
\end{equation} 

and

\begin{equation}\label{h}
h(X,V^T)=0.
\end{equation}

From $(\ref{c})$,

\begin{equation*}
\begin{tabular}{rl}
$e_i(x)=$ & $\nabla_{e_i}V^T(x)$ \vspace{0.5pc} \\
~$=$ & $\nabla_{e_i}(V^T)^je_j(x)$ \vspace{0.5pc} \\
~$=$ & $e_i(V^T)^j(x)e_j(x)+(V^T)^j(x)\nabla_{e_i}e_j(x)$.
\end{tabular}
\end{equation*}

Since $\nabla_{e_i}e_j(x)=0$,

\begin{center}
$e_i(x)=e_i(V^T)^j(x)e_j(x)$.
\end{center}

Therefore,

\begin{equation}\label{c2}
e_i(V^T)^j(x)=\delta_{ij}.
\end{equation} 

From $(\ref{h})$,

\begin{equation}\label{h2}
\begin{tabular}{rl}
$0=$ & $\tilde{g}(h(e_i,V^T),N)$ \vspace{0.5pc} \\
~$=$ & $g(A_N(e_i),\displaystyle \sum_{j=1}^n(V^T)^je_j)$ \vspace{0.5pc} \\
~$=$ & $\displaystyle \sum_{j=1}^n (V^T)^j\tau_{ij}$,
\end{tabular}
\end{equation}

where $\tau_{ij}$ is a smooth function defined by $\tau_{ij} \coloneqq g(A_N(e_i),e_j)$ on $U_x$.

From $(\ref{c2})$ and $(\ref{h2})$,

\begin{equation}\label{h3}
\begin{tabular}{rl}
$0=$ & $e_i(\displaystyle \sum_{j=1}^n (V^T)^j\tau_{ij})(x)$ \vspace{0.5pc} \\
~$=$ & $\displaystyle \sum_{j=1}^n\{ e_i((V^T)^j)(x)\tau_{ij}(x)+(V^T)^j(x)e_i(\tau_{ij})(x)\} $ \vspace{0.5pc} \\
~$=$ & $\kappa_i(x)+\displaystyle \sum_{j=1}^n (V^T)^j(x)e_i(\tau_{ij})(x) $.
\end{tabular}
\end{equation}

If $f(x) \neq 0$, we obtain $(V^T)^j(x) = 0 \hspace{0.1in} (1\leq j\leq n)$ from $(\ref{h2}).$ This contradicts $(\ref{h3})$ and $f(x) \neq 0$. So $f = 0$ and $X(f) = 0$ on Int$U_0$ for any vector field $X$ on $M$. From Case 1, $X(f) = 0$ on $M\backslash U_0$. This means $X(f) = 0$ on $M$. So we conclude that $f$ is constant on $M$. But $f\neq 0$ on $M\backslash U_0$, this is a contradiction. Therefore, Int$U_0 = \emptyset$.

By the same argument as in Case 1, $\kappa_i = \kappa_j$ and $X(\kappa_i) = 0$ on $M\backslash U_0$ for any $1\leq i, j\leq n$ and  any vector field $X$. From this and Int$U_0 = \emptyset$, $\kappa_i$ is constant which doesn't depend on $i$ on $M$. 

Therefore, $M$ is contained in a hypersphere.

\end{proof}

%%%%%%%%%%%%%%%%%%%%%%%%%%%%%%%
%%%%%%%%%%%%%%%%%%%%%%%%%%%%%%%
%%%%%%%%%%%%%%%%%%%%%%%%%%%%%%%

%{\bf Acknowledgments.}~

%%%%%%%%%%%%%%%%%%%%%%%%%%%%%%%%%%%%%%
\bibliographystyle{amsbook}

\end{document}